\documentclass[11pt]{article}
\usepackage[margin=1.05in]{geometry}
\usepackage{amsmath,amssymb,amsthm,booktabs,longtable}
\usepackage[hidelinks]{hyperref}
\usepackage[T1]{fontenc}
\usepackage{lmodern}
\hypersetup{pdftitle={Minimum central circles: an effective characterization of the global asymptotic constant},
 pdfauthor={Maurizio Falconi}}
\newtheorem{theorem}{Theorem}
\newtheorem{lemma}[theorem]{Lemma}
\newtheorem{proposition}[theorem]{Proposition}

\theoremstyle{definition}\newtheorem{definition}[theorem]{Definition}
\theoremstyle{remark}
\newcommand{\Rstar}{R^\ast}
\newcommand{\ch}{R_{\mathrm{chain}}}
\newcommand{\full}{R_{\mathrm{full}}}
\newcommand{\CC}{C_\ast}
\DeclareMathOperator{\atan}{arctan}
\DeclareMathOperator{\asin}{arcsin}
\title{Minimum central circles:\\an effective characterization of the global asymptotic constant}
\author{Maurizio Falconi}
\date{September 12, 2026}
\begin{document}
\maketitle
\begin{abstract}
Let $\Rstar(n)$ be the least radius of a central circle to which nonoverlapping
circles of radii $1,\ldots,n$ are externally tangent. We prove that
$\Rstar(n)=\CC n^2+o(n^2)$ and characterize $\CC$ by finite linear programs
with an explicit error tending to zero. The reduction preserves arbitrary
orders and all pairwise constraints: the limiting problem places marked
points on a line at pairwise separation at least the geometric mean of their
marks. Concatenation with a bounded boundary cost proves existence, and
balanced finite-word programs supply matching effective upper and lower
bounds. Their certified gap is $(1/k+1/r)/\pi$, before directed arithmetic
error, for $k$ mark types and words of length $r$.
A quantitative reflected-block recovery theorem supplies genuine permutations
and full ring geometry, with a countable extension and a strict four-block
improvement. The explicit interval is
$C_{\rm term}+\eta_{\rm width}\le\CC\le U_4$; neither endpoint is asserted sharp.
In particular, the coefficient $1/8$ proposed in the preceding finite study is false.
An elementary expression for $\CC$, efficient high-precision evaluation and
global floating-circle structure remain open.
\end{abstract}

\section{Problem and relation to prior finite work}

All outer circles are tangent to the central circle; outer circles need
not be tangent to each other. For a cyclic order $\sigma$, the adjacent
chain radius $\ch(\sigma)$ ignores nonadjacent constraints, whereas
$\full(\sigma)$ is the least radius satisfying every pair constraint in
that order. The global quantity is $\Rstar(n)=\min_\sigma\full(\sigma)$.
Always $\ch(\sigma)\le\full(\sigma)$. A feasible construction supplies
an upper bound, not an optimum certificate.

The preceding study \cite{v1} established the anti-Monge/Supnick connection
for adjacent-chain ordering and developed the all-pairs finite optimization
and numerical certification framework. It reported certified global optima
for $3\le n\le14$, with an absolute radius guard of $10^{-10}$, and proposed
$\Rstar(n)\sim n^2/8$. We refer to that paper for the finite tables, search
algorithm and geometric regime analysis. The asymptotic theory here neither
uses those finite computations as a premise nor enlarges their certified
range.

Our main result is an effective characterization of the true global leading
coefficient. Sections~\ref{sec:line}--\ref{sec:lp} give its proof directly
from the angular model, through an all-pair line problem, genuine-label
concatenation and balanced finite-word programs. The lower and upper bounds
converge to the same constant with a prescribed error. Separate explicit
constructions give the interval stated in the abstract and disprove the
coefficient conjecture of \cite{v1}. Their complete proof dependencies are
given in the supplementary material; the main variational characterization does
not assume either endpoint mechanism is optimal.

The line problem has a familiar geometric interpretation. Disks of radii
$a_i$ tangent to the horizontal axis, with centers $(2x_i,a_i)$, are
nonoverlapping precisely when $|x_i-x_j|\ge\sqrt{a_i a_j}$.
For $a_i\le1$, their horizontal envelope span differs from
$2(\max_i x_i-\min_i x_i)$ by at most two. Minimum-span disks on a shelf
are studied by Alt et al.\ \cite{shelf}. We do not claim this underlying
model as new; our asymptotic reduction, uniform-label recovery and effective
balanced-word characterization are proved directly below, without importing
an approximation or complexity theorem for that model.

\section{Exact angular comparison}\label{sec:line}

The cosine law gives the required pair angle
\begin{equation}\label{eq:theta}
 \theta_R(u,v)=2\asin\sqrt{\frac{uv}{(R+u)(R+v)}}
 =2\atan\sqrt{\frac{uv}{R(R+u+v)}}.
\end{equation}
Both directed cyclic arcs between a pair must be at least this angle.
This is equivalent to non-overlap, since their smaller separation belongs
to $[0,\pi]$. The angle is positive, symmetric, increasing in either
outer radius and strictly decreasing in $R>0$.
For a cyclic order $\sigma=(u_1,\ldots,u_n)$, $n\ge3$, with $u_{n+1}=u_1$,
$\ch(\sigma)$ is the unique positive root of
$\sum_{i=1}^n\theta_R(u_i,u_{i+1})=2\pi$.
Indeed, the sum decreases continuously from $n\pi$ to zero. Summing the
adjacent constraints in any full placement proves $\ch(\sigma)\le\full(\sigma)$.

\begin{definition}
For a finite multiset $a=(a_1,\ldots,a_N)$ of marks in $[0,1]$, let $b(a)$
be the least span of a line placement satisfying
\begin{equation}\label{eq:line}
 |x_i-x_j|\ge\sqrt{a_i a_j}\quad(i\ne j).
\end{equation}
Distinct zero-mark points may coincide. Write $b_n=b(1/n,2/n,\ldots,1)$.
\end{definition}

For a fixed word $w=(w_1,\ldots,w_N)$ in left-to-right order, its minimum
span $\ell(w)$ is attained by
\begin{equation}\label{eq:recurrence}
 x_1=0,\qquad x_i=\max_{j<i}\{x_j+\sqrt{w_iw_j}\},\qquad \ell(w)=x_N.
\end{equation}
Induction proves that these coordinates are componentwise no larger than
any feasible ordered coordinates with $x_1=0$; they also satisfy every
constraint. Nonnegative weights ensure ordered coordinates. Equivalently,
$\ell(w)$ is the longest increasing-index path from $1$ to $N$, with
edge weight $\sqrt{w_iw_j}$. Thus $b(a)$ is an attained minimum over a
finite set of orders. Deleting or decreasing marks cannot increase it;
scaling all marks by $h$ scales the minimum span by $h$.

\begin{lemma}[Bounded closing cost]\label{lem:closing}
Let $B(a)$ be the infimum circumference with both pair arcs at least
$\sqrt{a_i a_j}$. Then $b(a)\le B(a)\le b(a)+1$.
\end{lemma}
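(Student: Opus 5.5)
Two inequalities are needed. The lower bound $b(a)\le B(a)$ comes from cutting any circular placement into a line placement, and the upper bound $B(a)\le b(a)+1$ from closing an optimal line placement into a circle with one extra gap of length $1$. Throughout, a circular placement means points $y_i$ on a circle of circumference $L$, and the constraint is that both directed arcs between $i$ and $j$ are at least $\sqrt{a_ia_j}$. Equivalently, the shorter arc $d_L(y_i,y_j)=\min(|y_i-y_j|,\,L-|y_i-y_j|)$ is at least $\sqrt{a_ia_j}$, with representatives taken in $[0,L)$.

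\emph{Lower bound.} Fix a feasible circular placement of circumference $L$. Cut the circle at one of the points, say the point with smallest representative, placed at $0$, and unroll to $[0,L)$. For any pair, the line distance $|y_i-y_j|$ is one of the two directed arcs, so it is at least $\sqrt{a_ia_j}$. Hence the unrolled coordinates are a feasible line placement with span $\max_i y_i-\min_i y_i<L$. Therefore $b(a)\le L$, and taking the infimum over feasible circular placements gives $b(a)\le B(a)$. Coincident zero marks cause no trouble, since their constraint is vacuous in both models.

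\emph{Upper bound.} Take an optimal line placement, which is attained by the recurrence \eqref{eq:recurrence}: after sorting, $0=x_1\le\dots\le x_N=b(a)$. Wrap it onto a circle of circumference $L=b(a)+1$. For $i<j$ the two directed arcs are $x_j-x_i\ge\sqrt{a_ia_j}$ and
\[
L-(x_j-x_i)\ge L-b(a)=1\ge\sqrt{a_ia_j},
\]
where the last step uses $a_i,a_j\in[0,1]$. So every pair constraint holds and $B(a)\le b(a)+1$.

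This is the step where the hypothesis that marks lie in $[0,1]$ is used; the rest is routine. A possible subtlety is whether $B(a)$ should be defined via the smaller separation, as in the angular model, but feasibility through both directed arcs is exactly the condition checked above. The degenerate cases $N\le1$ or all marks zero are covered by the same argument with $b(a)=0$, where the circle of circumference $1$ is trivially feasible.
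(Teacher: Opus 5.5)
Your proposal is correct and follows the paper's proof: you cut a feasible circle to get a line placement of no larger span for $b(a)\le B(a)$, and you close an optimal line placement with an extra unit gap for $B(a)\le b(a)+1$. As in the paper, the second arc of every pair then contains the added gap and so is at least $1\ge\sqrt{a_ia_j}$.
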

\begin{proof}
Cut a feasible circle to obtain a feasible line placement. Conversely,
close a feasible line placement by an additional gap of length one. One
arc for every pair is its line separation; the other contains the added
gap and is at least $1\ge\sqrt{a_i a_j}$. Both arcs are controlled.
\end{proof}

\begin{lemma}[Exact geometric squeeze]\label{lem:squeeze}
For every $n\ge3$,
\begin{equation}\label{eq:squeeze}
 \frac{b_n}{\pi n}-\frac1n
 \le\frac{\Rstar(n)}{n^2}\le\frac{b_n+1}{\pi n}.
\end{equation}
\end{lemma}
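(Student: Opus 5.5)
The plan is to compare the exact pair angle \eqref{eq:theta} with the linear quantity $\sqrt{uv}/R$ from above and below, and then pass between circle placements and the circumference problem $B(a)$ of Lemma~\ref{lem:closing} by a single scaling. Throughout, the outer radii are $u=i$, $v=j$ with $1\le i,j\le n$, and the marks are $a_i=i/n$. Thus $\sqrt{uv}=n\sqrt{a_ia_j}$ and $b_n=b(a)$. Only the two elementary inequalities $\arctan x\le x$ and $\arcsin y\ge y$ (for $x\ge0$, $y\in[0,1]$) are needed.

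\emph{Upper bound.} Since $R(R+u+v)\ge R^2$, the arctangent form gives $\theta_R(u,v)\le 2\sqrt{uv}/R$. Fix $\varepsilon>0$ and take a circle placement with circumference $B\le B(a)+\varepsilon$ in which both arcs of every pair are at least $\sqrt{a_ia_j}$. Map arc length $s$ to the angle $2\pi s/B$ and set $R=nB/\pi$. Then every pair angle is at least
\[
 \frac{2\pi}{B}\sqrt{a_ia_j}=\frac{2\sqrt{uv}}{R}\ge\theta_R(u,v),
\]
so this is a feasible nonoverlapping configuration at radius $R$. Every order is allowed here, so $\Rstar(n)\le n(B(a)+\varepsilon)/\pi$. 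Letting $\varepsilon\to0$ and applying Lemma~\ref{lem:closing} gives $\Rstar(n)\le n(b_n+1)/\pi$. This is the right-hand inequality of \eqref{eq:squeeze}. Passing to the limit in $\varepsilon$ avoids any question of whether $B(a)$ is attained.

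\emph{Lower bound.} Take a feasible configuration at radius $R=\Rstar(n)$; if only an infimum is available, take $R$ arbitrarily close to it. Since $u,v\le n$, the arcsine form gives
\[
 \theta_R(u,v)\ge 2\sqrt{\frac{uv}{(R+u)(R+v)}}\ge\frac{2\sqrt{uv}}{R+n}=\frac{2n\sqrt{a_ia_j}}{R+n}.
\]
Multiply all angular positions by $(R+n)/(2n)$. This yields a circle of circumference $\pi(R+n)/n$ on which both arcs of every pair are at least $\sqrt{a_ia_j}$. Hence Lemma~\ref{lem:closing} gives
\[
 b_n\le B(a)\le\frac{\pi(R+n)}{n}.
\]
Rearranging gives $R/n^2\ge b_n/(\pi n)-1/n$, which is the left-hand inequality of \eqref{eq:squeeze}.

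The only delicate point is bookkeeping rather than analysis. I need both directed arcs, not merely the smaller one, to dominate the required quantity in each direction. The scalings above act on all arcs simultaneously, so both arcs are handled at once. I also need infima and minima to match: this is handled by the $\varepsilon$-argument on one side and by the monotonicity of $\theta_R$ in $R$ on the other. Finally, the same-order requirement is irrelevant, because both $\Rstar(n)$ and $B(a)$ already minimize over all orders.
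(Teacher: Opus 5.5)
Your proof is correct and follows the paper's argument essentially step for step. You use the same two bounds $2\sqrt{uv}/(R+n)\le\theta_R(u,v)\le 2\sqrt{uv}/R$ from $\arcsin z\ge z$ and $\arctan z\le z$, rescale a feasible ring by $(R+n)/(2n)$ for the lower inequality, and rescale a closed line placement to circumference $2\pi$ for the upper one. The paper uses the circumference $b_n+1$ directly, since $b_n$ is attained, whereas your detour through $B(a)+\varepsilon$ is equivalent bookkeeping.
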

\begin{proof}
For $1\le u,v\le n$, the two expressions in \eqref{eq:theta} give
\[
 \frac{2\sqrt{uv}}{R+n}\le\theta_R(u,v)\le\frac{2\sqrt{uv}}R.
\]
The first uses $\asin z\ge z$ and
$\sqrt{(R+u)(R+v)}\le R+n$; the second uses $\atan z\le z$.
Scaling a feasible ring's angles by $(R+n)/(2n)$ gives a feasible linear
circle of circumference $\pi(R+n)/n$, proving the lower inequality.
Conversely, the circumference $b_n+1$ from Lemma~\ref{lem:closing},
scaled to $2\pi$, satisfies every original pair angle at
$R=n(b_n+1)/\pi$. This proves the upper inequality. No Taylor expansion,
adjacency reduction or assumption on the optimal order was used.
\end{proof}

\section{Existence by genuine-label concatenation}

\begin{theorem}[The leading limit exists]\label{thm:existence}
There is a constant $\CC$ such that
\begin{equation}\label{eq:limit}
 \Rstar(n)=\CC n^2+o(n^2),\qquad
 \pi\CC=E:=\lim_{n\to\infty}\frac{b_n}{n}
 =\inf_{n\ge2}\frac{b_n+1}{n}.
\end{equation}
\end{theorem}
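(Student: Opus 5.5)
The plan is to prove a Fekete-type statement: show that $b_n/n$ converges to $E:=\inf_{n\ge2}(b_n+1)/n$, and then transfer this limit to $\Rstar(n)/n^2$ using Lemma~\ref{lem:squeeze}. The transfer step is immediate once the limit is known. Both bounds in \eqref{eq:squeeze} differ from $b_n/(\pi n)$ by $O(1/n)$. So if $b_n/n\to E$, then $\Rstar(n)/n^2\to E/\pi$, and $\CC:=E/\pi$ gives \eqref{eq:limit}. The real work is therefore the limit of $b_n/n$.

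The easy inequality is $\liminf b_n/n\ge$ something, and I would handle it like this. Trivially $b_n/n\le(b_n+1)/n$ for every $n$. Hence $\liminf b_n/n$ is at most $\liminf (b_n+1)/n$, and the latter equals $\lim b_n/n$ whenever that limit exists. What we actually need is the reverse: $\limsup_N b_N/N\le (b_m+1)/m$ for every fixed $m\ge2$. Together with $b_N/N\ge (b_N+1)/N-1/N\ge E-1/N$, this gives $\lim b_N/N=E=\inf_m (b_m+1)/m$.

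For the upper bound I would use genuine-label concatenation. Fix $m$ and write $N=qm+s$ with $0\le s<m$. Partition the labels $1,\dots,qm$ into $m$ consecutive groups of size $q$: group $t$ contains the labels in $((t-1)q,tq]$, each at most $tq$. After scaling by $1/N$, every mark in group $t$ is at most $tq/N\le t/m$. Now take an optimal placement for $b_m$, with marks $t/m$, $t=1,\dots,m$, and positions $y_t$. Scaling it by $q$ (which multiplies positions only) is the wrong move, since the marks must stay fixed. Instead, replace each point $t$ by a cluster of $q$ points. The natural way is to concatenate $q$ copies of the $b_m$ placement, each followed by a gap of length $1$, exactly as in Lemma~\ref{lem:closing}. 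Copy $c$ receives from each group $t$ one label, namely the $c$-th element of group $t$, whose mark is at most $t/m$. Within a copy, constraints hold because marks only decreased. Between different copies, the separation includes at least one added gap of length $\ge1\ge\sqrt{a_ia_j}$. The leftover $s<m$ labels, all with marks $\le1$, are appended at spacing $1$, costing at most $m$.

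This gives
\[
b_N\le q(b_m+1)+m,\qquad \frac{b_N}{N}\le\frac{q(b_m+1)+m}{qm}\to\frac{b_m+1}{m}.
\]
I also need to check that $b_N$ is measured on the unscaled marks $k/N$ correctly. Decreasing marks does not increase $b$, and the copy uses marks $t/m\ge$ the actual ones, so the check goes through.

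The main obstacle is exactly this bookkeeping. Every genuine label must be assigned to a slot whose mark dominates it, and the concatenation gaps must be certified to cover all cross-copy pairs, including those with the leftover labels. The remaining statement, that the infimum runs over $n\ge2$ and equals the limit, then follows from the two inequalities above.
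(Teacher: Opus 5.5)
Your proposal is correct and follows essentially the paper's proof. You concatenate copies of an optimal $b_m$ template separated by unit gaps, assign every genuine label to a slot whose mark dominates it to get $\limsup_N b_N/N\le(b_m+1)/m$, and combine this with $b_N/N\ge E-1/N$ and the transfer through Lemma~\ref{lem:squeeze}. The differences are only bookkeeping. You take $q=\lfloor N/m\rfloor$ and append the $s<m$ leftover labels at unit spacing, whereas the paper takes $q=\lceil N/n\rceil$, keeps the $N$ smallest marks and rescales marks and positions by $nq/N$ using homogeneity of the line constraints; your direct bound $b_N/N\ge E-1/N$ also replaces the paper's liminf-subsequence step.
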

\begin{proof}
Copies of a feasible line template, separated by unit gaps, satisfy all
cross-copy constraints as well as the internal ones. Fix $n\ge2$. Given
$N\ge n$, let $q=\lceil N/n\rceil$ and concatenate $q$ copies of an
optimal $b_n$ template. Its span is $q(b_n+1)-1$.
Retain the $N$ smallest marks. The $i$th sorted retained mark is
$\lceil i/q\rceil/n$. Scale all marks and coordinates by $h=nq/N$.
The resulting $i$th mark is at least $i/N$, so decreasing it to $i/N$
preserves every constraint. Each target label is assigned exactly once
to an actual retained point, with ties broken arbitrarily. Hence
\begin{equation}\label{eq:copies}
 b_N\le\frac{nq}{N}\bigl[q(b_n+1)-1\bigr].
\end{equation}
Intermediate marks exceeding one cause no problem: the line inequalities
are homogeneous, and final marks are exactly $1/N,\ldots,1$.

As $N\to\infty$, $q/N\to1/n$, giving
$\limsup_N b_N/N\le(b_n+1)/n$ for every fixed $n$.
The sequence is bounded between zero and one by equally spaced placements.
Taking $n$ along a subsequence attaining its liminf gives equality of
limsup and liminf. Taking the infimum and then $n\to\infty$ proves
the last equality in \eqref{eq:limit}. Lemma~\ref{lem:squeeze} transfers
the limit to the original ring problem for all sizes and both parities.
Finally, at least $n/2$ marks are at least $1/2$. Their successive line
separations are at least $1/2$, so $E\ge1/4$ and $\CC>0$.
\end{proof}

\section{Effective finite-word characterization}\label{sec:lp}

\subsection{Finite types and a uniform quantization error}
For $k\ge1$, let $A_{k,q}$ contain $q$ copies of each mark
$1/k,\ldots,1$, and let $A^-_{k,q}$ instead contain $q$ copies of
$0,1/k,\ldots,(k-1)/k$. Define
$e_k=\lim_{q\to\infty}b(A_{k,q})/(kq)$ and
$e^-_k=\lim_{q\to\infty}b(A^-_{k,q})/(kq)$; these limits exist.
Indeed $a_q=b(A_{k,q})+1$ is subadditive by unit-gap concatenation.
Writing $q=hp+s$ at fixed $p$ proves
$\limsup a_q/q\le a_p/p$; then taking $p$ along a liminf subsequence
proves the limit. The same argument applies to the lower types.

The two multisets share every nonzero type except the top type. Zero marks
cost nothing. Adding the $q$ unit marks as a separate equally spaced
block costs at most $q$, including the join. Thus, also for $k=1$,
\[
 e^-_k\le e_k\le e^-_k+\frac1k.
\]
For $n=kq$, the sorted uniform marks $i/n$ lie between the two sorted
type multisets. Theorem~\ref{thm:existence} and monotonicity imply
\begin{equation}\label{eq:types}
 e_k-\frac1k\le E\le e_k.
\end{equation}

\subsection{Balanced words and the quantified boundary relaxation}
Fix $k\ge1$ and $r\ge2$, and use all $k^r$ words
$w\in\{1/k,\ldots,1\}^r$, including repetitions. Let $c_j(w)$ count
occurrences of $j/k$. Compute $\ell(w)$ by \eqref{eq:recurrence}, and define
\begin{equation}\label{eq:lp}
\begin{split}
 \lambda_{k,r}=\min_p\ &\frac1r\sum_w p_w\ell(w),\\
 p_w\ge0,\quad &\sum_w p_w=1,\quad
 \sum_w p_wc_j(w)=\frac rk\quad(1\le j\le k).
\end{split}
\end{equation}
The polytope is nonempty: equally mix the $k$ constant words.

\begin{theorem}[Matching effective variational bounds]\label{thm:lp}
For every $k\ge1,r\ge2$,
\begin{equation}\label{eq:effective}
 \boxed{\quad\frac{\lambda_{k,r}-1/k}{\pi}\le\CC
 \le\frac{\lambda_{k,r}+1/r}{\pi}\quad}.
\end{equation}
Consequently $\CC=\pi^{-1}\lim_{k,r\to\infty}\lambda_{k,r}$, jointly,
with certified bracket width $(1/k+1/r)/\pi$.
\end{theorem}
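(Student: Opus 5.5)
The plan is to prove both inequalities through the type constants $e_k$ of the previous subsection, using \eqref{eq:types}, namely $e_k-1/k\le E\le e_k$, and $\pi\CC=E$ from Theorem~\ref{thm:existence}. It then suffices to show
\[
 \lambda_{k,r}\le e_k\le\lambda_{k,r}+\frac1r .
\]
The left inequality then gives $E\ge e_k-1/k\ge\lambda_{k,r}-1/k$, and the right one gives $E\le e_k\le\lambda_{k,r}+1/r$. Dividing by $\pi$ yields \eqref{eq:effective}. The joint limit statement follows at once, since the bracket width $(1/k+1/r)/\pi$ tends to zero as $k,r\to\infty$ jointly.

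\emph{Lower bound $e_k\ge\lambda_{k,r}$ (cutting).} Take $q$ a multiple of $r$ and an optimal placement of $A_{k,q}$. Read its points left to right, breaking ties arbitrarily, to get a word $W$ of length $kq$. Cut $W$ into $m=kq/r$ consecutive blocks $W^{(1)},\dots,W^{(m)}$ of length $r$.

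Each block's restricted placement is feasible in its own order, so its span is at least $\ell(W^{(s)})$ by the minimality property of \eqref{eq:recurrence}. Since the blocks occupy consecutive intervals of the line, the total span is at least $\sum_s\ell(W^{(s)})$.

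Now let $p$ be the empirical distribution of the blocks, $p_w=\#\{s:W^{(s)}=w\}/m$. Each type occurs exactly $q$ times in $W$, so
\[
 \sum_w p_wc_j(w)=\frac qm=\frac rk .
\]
Hence $p$ is feasible in \eqref{eq:lp}, and therefore
\[
 b(A_{k,q})\ge m r\lambda_{k,r}=kq\,\lambda_{k,r}.
\]
Dividing by $kq$ and letting $q\to\infty$ along multiples of $r$ gives $e_k\ge\lambda_{k,r}$; this is legitimate because the limit $e_k$ exists.

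\emph{Upper bound $e_k\le\lambda_{k,r}+1/r$ (gluing).} The constraint data of \eqref{eq:lp} are rational and only the objective involves the irrational values $\ell(w)$. The minimum is attained at a vertex of the polytope, and that vertex is rational. Write the optimal $p$ as $p_w=m_w/M$ with integers $m_w\ge0$.

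Place $t m_w$ copies of each word $w$ in its optimal ordered placement \eqref{eq:recurrence}, with unit gaps between consecutive words. Cross-word constraints hold because every gap is at least $1\ge\sqrt{a_ia_j}$, as in Lemma~\ref{lem:closing}. The mark count of type $j$ is $tM r/k$, so this is exactly $A_{k,q}$ with $q=tMr/k$. Its span is at most $tM(r\lambda_{k,r}+1)$, so
\[
 \frac{b(A_{k,q})}{kq}\le\frac{tM(r\lambda_{k,r}+1)}{tMr}=\lambda_{k,r}+\frac1r .
\]
Letting $t\to\infty$ gives $e_k\le\lambda_{k,r}+1/r$.

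The main technical point I expect is this rationality and divisibility bookkeeping in the upper bound: the balanced constraint must be matched exactly by integer word counts. If one prefers to avoid vertex rationality, an alternative is to approximate $p$ by rationals within the polytope. Nonemptiness, witnessed by the mixture of constant words, and the rational constraint matrix give a dense set of rational feasible points. Continuity of the linear objective then suffices.

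In the lower bound the only subtlety is that cutting into blocks discards cross-block constraints and inter-block gaps. Both only decrease the span, so the inequality goes the right way.
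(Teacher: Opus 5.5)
Your proof is correct and follows the paper's own argument. You cut an optimal placement of $A_{k,q}$ into $r$-blocks to get $\lambda_{k,r}\le e_k$, glue unit-gap-separated batches built from a rational optimal vertex to get $e_k\le\lambda_{k,r}+1/r$, and then combine with \eqref{eq:types}; your one refinement is taking $q$ divisible by $r$, which makes the empirical block distribution exactly feasible and so replaces the paper's step of discarding leftover vertices and passing to convergent subsequences in the simplex.
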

\begin{proof}
Partition an optimal ordering of $A_{k,q}$ into consecutive blocks of
$r$ vertices, discarding fewer than $r$ leftover vertices. Their spans
sum to at most the full span and each is at least its word cost.
Empirical word distributions have convergent subsequences in the finite
simplex. Their limiting mean type counts are $r/k$, since the discarded
counts vanish after normalization. Hence $\lambda_{k,r}\le e_k$.

Conversely, the rational feasible polytope in \eqref{eq:lp} has a minimizing
rational vertex even though its cost entries are algebraic. Choose a
common denominator $D$. A batch with $Dp_w$ copies of each word has
exactly $Dr/k$ copies of each type, an integer by the count constraints.
Place every word at its minimum span and separate blocks by unit gaps.
All within-block and cross-block pairs are feasible. Repeating batches
gives asymptotic mean span
$\sum_w p_w(\ell(w)+1)/r=\lambda_{k,r}+1/r$.
The finite-type limit therefore satisfies
$e_k\le\lambda_{k,r}+1/r$. Combine with \eqref{eq:types}.
\end{proof}

\subsection{Exact certificates and computational meaning}
Directed rational square-root enclosures of width $\epsilon$ give
word-span enclosures of width at most $(r-1)\epsilon$: every path in
\eqref{eq:recurrence} has at most $r-1$ edges. Rational endpoint LPs
then enclose $\lambda_{k,r}$ with additional width at most
$(r-1)\epsilon/r$. They can be solved by finite exact vertex enumeration.
Practical solvers may instead find solutions for subsequent exact verification.
For a rational lower cost $\ell^-(w)$, dual coefficients satisfying
\[
 y_0+\sum_j y_jc_j(w)\le\ell^-(w)/r\quad\hbox{for every word}
\]
certify the lower value $y_0+(r/k)\sum_jy_j$. Feasible rational $p$
certifies an upper value using upper costs. All such checks are finite
rational comparisons; equality is not decided by indefinite interval
refinement. Together with a rational enclosure of $\pi$, this proves
arbitrary-precision computability of $\CC$ in principle.

The word count is exponential in $r$. No efficient high-precision
algorithm or closed-form constant follows. The supplementary implementation
uses SciPy to find sparse primal and dual solutions; exact rational
arithmetic validates every equality and word inequality. Small word programs
need not improve the stronger explicit endpoint bounds below.

\section{Explicit geometric upper constructions}\label{sec:upper}

Fix $0\le\alpha<1$ and nonnegative adjacent slab lengths
$\ell_1,\ldots,\ell_k$ of total $T<1-\alpha$. Put $A=1+\alpha$,
$t_0=0$, $t_j=\sum_{h\le j}\ell_h$. On each slab
$[t_{j-1},t_j]$, use equal masses at
\[
 (t,A+t,A+t_{j-1}+t_j-t),\qquad
 (t,A+t_{j-1}+t_j-t,A+t).
\]
After $T$, use the diagonal $(t,1+\{t+\alpha\},1+\{t+\alpha\})$.
This defines a measure $\mu$ on $[0,1]\times[1,2]^2$ with uniform low
and separate high marginals. Its complete cell cost is
\begin{equation}\label{eq:blockcost}
 C(\mu)=\frac1{4\pi}\int
 \max\{\sqrt t(\sqrt X+\sqrt Y),\sqrt{XY}\}\,d\mu.
\end{equation}
Correct marginals alone do not establish finite recovery.

\begin{theorem}[General block transfer]\label{thm:blocks}
Every such finite partition has genuine alternating-half permutations with
all-pairs-feasible radii $\rho_m=\full(\sigma_m)$ and
\[
 \left|\frac{\rho_m}{(2m)^2}-C(\mu)\right|
 \le\frac{(6k^2+28k+1060)/m+16384/(3m^2)}{4\pi}
\]
for $m\ge\max\{2048,\lceil2/(1-\alpha-T)\rceil\}$.
It follows that $\CC\le C(\mu)$. Countably many slabs also transfer
if their total length is strictly below $1-\alpha$.
\end{theorem}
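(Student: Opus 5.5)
We build, for each $m$, an explicit cyclic order of the radii $1,\dots,2m$ whose structure discretizes $\mu$, and then sandwich $\full(\sigma_m)$ between two quantities that both converge to $(2m)^2C(\mu)$. The interpretation is as follows. A point $(t,X,Y)$ in the support of $\mu$ records a low radius $tm$ placed between two high radii $Xm$ and $Ym$, with $X,Y\in[1,2]$. The integrand in \eqref{eq:blockcost} is the local angular cost of one low--high--high cell. The first term of the maximum is the cost of routing through the low circle; this is the sum of the two pair angles $\theta_R(tm,Xm)+\theta_R(tm,Ym)$ at leading order, using the bounds $2\sqrt{uv}/(R+n)\le\theta_R\le2\sqrt{uv}/R$ from the proof of Lemma~\ref{lem:squeeze}. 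The second term is the cost when the direct high--high constraint is binding.

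\textbf{Construction.} Discretize each slab $[t_{j-1},t_j]$ into low labels $i$ with $i/m\in[t_{j-1},t_j)$. On each slab, the two reflected diagonals pair low label $i$ with the high labels $\lfloor m(A+t)\rfloor$ and $\lfloor m(A+t_{j-1}+t_j-t)\rfloor$, in the two mirror orientations. This is the ``reflected block.'' After $T$ we use the diagonal $X=Y=1+\{t+\alpha\}$, discretized as consecutive high labels. The high labels then range over $m+1,\dots,2m$, each used once, up to $O(1)$ rounding defects per slab. These defects are repaired by local swaps at the slab boundaries, which gives the $O(k)$ contributions. The cyclic order $\sigma_m$ alternates low and high, hence ``alternating-half.'' The hypothesis $m\ge\lceil2/(1-\alpha-T)\rceil$ guarantees that the residual diagonal segment has at least two labels, so the wraparound $1+\{t+\alpha\}$ is realized without collision.

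\textbf{Upper bound on $\full$.} Rather than solving for $\full$, we exhibit an explicit angular placement at a trial radius $R=(2m)^2(C(\mu)+\delta)$. Each cell receives the angle $\max$ of its two leading costs, plus an explicit slack. Adjacent constraints are then satisfied by \eqref{eq:theta}, with a relative error of $O(1/m)$ because $u,v\le2m\ll R$. The second-order term $16384/(3m^2)$ comes from bounding $\arcsin$ and $\arctan$ by cubic Taylor remainders, with $(2m)^2$ in the denominators. Nonadjacent pairs are the main obstacle. We must show that any two circles at cyclic distance at least two in $\sigma_m$ are separated by at least $\theta_R$. For two high circles separated by one low circle, the $\max$ in the cell cost handles this by design. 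For larger distances, the intervening angle is at least about $2\sqrt{1\cdot1}/R$ per intervening high circle, while the required angle is at most about $2\sqrt{XY}/R\le4/R$. So two intervening high--high gaps already suffice, and the remaining cases of distance $3$ are checked using $X,Y\ge1$ and the monotone (reflected) structure within a slab. I would verify this case analysis first, since it is the part that genuinely uses the reflected design. The constant $1060$ and the threshold $2048$ absorb the boundary cases at slab ends and at the seam with the diagonal.

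\textbf{Lower bound, and the remaining claims.} Summing the adjacent constraints gives $\ch(\sigma_m)\le\full(\sigma_m)$. The adjacent cell sums are Riemann sums for $\int\max\{\cdots\}\,d\mu$. The integrand is Lipschitz on each slab except at $t=0$, where $\sqrt t$ has bounded variation, so the sums have $O(k/m)$ error. This gives the two-sided estimate. Then $\CC\le\lim\full(\sigma_m)/(2m)^2=C(\mu)$, via Theorem~\ref{thm:existence} applied along $n=2m$. For countably many slabs, truncate to the first $k$ slabs and absorb the tail mass into the diagonal region. This changes $C(\mu)$ by at most the bounded integrand times the tail length, which tends to $0$. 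Since the total length is strictly below $1-\alpha$, the threshold condition holds uniformly in $k$, and we let $k\to\infty$ after $m\to\infty$.
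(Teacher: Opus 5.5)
\textbf{The construction does not yet give a permutation.} In an alternating cycle each high is adjacent to exactly two lows, and your prescription does not assemble into one cycle. If each low $i=tm$ of a slab is joined to both $\lfloor m(A+t)\rfloor$ and $\lfloor m(A+t_{j-1}+t_j-t)\rfloor$, then $i$ and its mirror low $i'\approx m(t_{j-1}+t_j)-i$ receive the same two highs. The prescribed adjacencies therefore split into roughly $m\ell_j/2$ disjoint $4$-cycles per slab. That is a defect at essentially every cell, not an $O(1)$ rounding defect that swaps at slab boundaries can repair. Producing genuine permutations is the core of the statement, so this step cannot be left to local repairs.

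The paper's device is a parity split. Round each slab to an even length $l_j$, fix the odd ranks and reverse only the even ranks inside each block. This gives a permutation $J$ that is a product of disjoint involutions. Then set $P_i=m+1+((J(i)+\lfloor\alpha m\rfloor-1)\bmod m)$ in $\sigma_m=(1,P_1,\dots,m,P_m)$. Odd positions follow the identity diagonal and even positions the reflected one, each at half density. Consecutive cells therefore alternate between $(t,A+t,A+t_{j-1}+t_j-t)$ and its mirror image, which are exactly the two half-mass components of $\mu$. Every high is the $X$ of one cell and the $Y$ of the next.

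\textbf{The lower side uses the wrong radius.} $\ch(\sigma_m)$ sees only the adjacent angles $\theta_R(P_{i-1},i)+\theta_R(i,P_i)$. Their sums are Riemann sums for $\frac1{4\pi}\int\sqrt t(\sqrt X+\sqrt Y)\,d\mu$, not for the maximum in $C(\mu)$. For $t<1/8$ one has $\sqrt t(\sqrt X+\sqrt Y)\le2\sqrt{2t}<1\le\sqrt{XY}$, and the low marginal is uniform. So $\ch(\sigma_m)/(2m)^2$ converges strictly below $C(\mu)$ and cannot give the two-sided estimate.

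What you need is the necessity half of the exact cell criterion. In any full placement in this order, the directed arc from $P_{i-1}$ to $P_i$ contains only the low $i$. It is therefore at least $\theta_R(P_{i-1},P_i)$ as well as $\theta_R(P_{i-1},i)+\theta_R(i,P_i)$. Sum these cell maxima and use $\theta_R(u,v)\ge2\sqrt{uv}/(R+2m)$. This gives $\rho_m\ge m^2G_m/\pi-2m$, where $G_m=\frac1m\sum_i g(i/m,P_{i-1}/m,P_i/m)$ and $g$ is the integrand of $C(\mu)$.

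Your upper side, which alone yields $\CC\le C(\mu)$, is sound once made exact rather than approximate. Allot each cell $\frac2R\max\{\sqrt i(\sqrt{P_{i-1}}+\sqrt{P_i}),\sqrt{P_{i-1}P_i}\}$. This dominates every true requirement, since $\theta_R(u,v)\le2\sqrt{uv}/R$. Two full cells then give at least $4(m+1)/R>4m/R$, so far high pairs are settled exactly. With approximate angles, your margin there is of the same relative order $1/m$ as the approximation error. Pairs with a low endpoint need only that every high exceeds every low, not the reflected structure. This mirrors the paper's sufficiency argument (high-shell triangle inequality plus high/low separation). Taking $R=m^2G_m/\pi$ gives $\rho_m\le m^2G_m/\pi$. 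The angular part of the error is then at most $1/(2m)$, with no Taylor remainders or root bracketing, and what remains is the Riemann-sum comparison of $G_m$ with $\int g\,d\mu$.
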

\begin{proof}[Proof with explicit construction]
Let $s=\lfloor\alpha m\rfloor$, $l_j=2\lfloor\ell_jm/2\rfloor$ and
$a_j=\sum_{h\le j}l_h$. Reverse only the even ranks within each
$(a_{j-1},a_j]$, fixing odd ranks, to obtain a permutation $J$.
Set $P_i=m+1+((J(i)+s-1)\bmod m)$ and
$\sigma_m=(1,P_1,\ldots,m,P_m)$, with $P_0=P_m$.
Disjoint involutions and the cyclic shift prove bijectivity, including
zero blocks and length-two identities. The lower bound on $m$ keeps the final
block endpoint at least two ranks before the high wrap, and its exit
strictly before that wrap.

Double panels of width $2/m$, allocated half to each parity, recover
the two reflected orientations. Interior coordinate error is at most
$4/m$. There are at most $k+3$ exceptional actual cells: shared exits,
the cyclic predecessor, and the two wrap cells. Rounded slab boundaries
and wrap differ on a set of length at most $[k(k+1)+1]/m$, with remaining
coordinate error at most $(4k-1)/m$. Write
$g(t,X,Y)=\max\{\sqrt t(\sqrt X+\sqrt Y),\sqrt{XY}\}$ for the integrand
in \eqref{eq:blockcost}. It is $4$-Lipschitz and lies in $[1,3)$.
The actual normalized full-cell moment is
\[
 G_m=\frac1m\sum_{i=1}^m g(i/m,P_{i-1}/m,P_i/m).
\]
It differs from
$\int g\,d\mu$ by at most $(6k^2+28k+36)/m$.

For arbitrary distinct highs in $[m+1,2m]$, the exact full-feasibility
criterion for this alternating order is
\[
 \sum_{i=1}^m\max\{\theta_R(P_{i-1},i)+\theta_R(i,P_i),
                         \theta_R(P_{i-1},P_i)\}\le2\pi.
\]
Its sufficiency follows from the high-shell triangle inequality
$2\theta_R(m+1,m+1)>\theta_R(2m,2m)$ and high/low separation:
each directed high path contracts through whole cells, and paths with
low endpoints either contain such a high path or an incident high gap
that already exceeds the endpoint requirement. Both directions are
treated separately. At its unique root, choose the first half of each
cell at its required low/high angle and put any excess in the other half.
This closes at $2\pi$ and satisfies all pairs.

At $R=4cm^2$, $c\ge1/32$, the full score differs from $G_m/(2c)$ by
at most $1024/m+16384/(3m^2)$. For $m\ge2048$ this first brackets
the actual normalized root in $(1/32,1/2)$, then yields the stated error.
Deleting label $2m$ gives the odd upper bound. For countably many slabs,
truncate and use that the omitted complete-cost integral is at most six
times the omitted length. A diagonal sequence with $m$ growing faster
than $k^3$ pays both finite and tail errors. Full cell/seam formulas and
the angle estimate are supplied in the versioned proof supplement \cite{blocks}.
\end{proof}

For the explicit four-block construction, let $x_*$ be the unique minimizer
on $[0,1]$ of
\[
 E(x)=\int_0^x\!\left[
 \max\{\sqrt{(1+t)(1+x-t)},\sqrt t(\sqrt{1+t}+\sqrt{1+x-t})\}
 -\max\{1+t,2\sqrt{t(1+t)}\}\right]dt.
\]
Let $K(\alpha)=\int_0^1\max\{\sqrt{t h_\alpha(t)},h_\alpha(t)/2\}\,dt$,
where $h_\alpha(t)=1+\{t+\alpha\}$, and let $\widehat\alpha$ be the
unique zero of $K'(\alpha)+(1+\alpha)E(x_*)$ on $[0,1/2]$.
Set $A=1+\widehat\alpha$ and $\lambda=Ax_*$.
The second width $\epsilon_b$ is the unique mixed-branch stationary
minimum for a separate reflection starting at $\lambda$; the third width
$\Delta_*$ is the analogous unique minimum starting at
$v=\lambda+\epsilon_b$. Each width is minimized over
$[0,A/3-u]$ at its fixed start $u$, retaining the full max.
The supplement \cite{upperinputs} gives their exact defining integrals,
existence/uniqueness proofs and rational isolations:
\[
\begin{gathered}
 .1093<\widehat\alpha<.10931,\qquad .2876<x_*<.2877,\\
 .043<\epsilon_b<7/160,\qquad 29/5000<\Delta_*<27/4000.
\end{gathered}
\]
These terminating decimals are rational bracket endpoints, not definitions
or numerical replacements for the parameters.

Let $C_3(d)$ be \eqref{eq:blockcost} for the three lengths
$(\lambda,\epsilon_b,d)$, and $C_4(\eta)$ the same cost for the four
lengths $(\lambda,\epsilon_b,\Delta_*,\eta)$.
Use $\eta=1/20000$ and define $U_4=C_4(1/20000)$.
If $w=\lambda+\epsilon_b+\Delta_*$, $B=A+w$ and
$M=B+\eta/2$, the fourth increment at $0<\eta\le1/20000$ is exactly
\[
 C_4(\eta)-C_3(\Delta_*)=
 -\frac1{4\pi}\int_{-\eta/2}^{\eta/2}
       \frac{z^2}{M+\sqrt{M^2-z^2}}\,dz.
\]
Rational input margins keep both changed maxima strictly chord.
Since $2B\le M+\sqrt{M^2-z^2}\le2M$, $M<3/2$ and $\pi<4$,
\begin{equation}\label{eq:U4}
 \CC\le U_4<C_3(\Delta_*)-\frac1{4608000000000000}.
\end{equation}
The leading decrement is $-\eta^3/(96\pi B)$, with remainder between
zero and $\eta^4/(192\pi B^2)$. The theorem transfers this literal
continuous saving to genuine full geometry. It does not make four blocks
globally optimal or optimize an infinite partition.

\section{An explicit global lower endpoint}

Define $\tau$ as the unique root of $\cos\tau=\tau$ in $(0,1)$, set
$q=(1-\sin\tau)/(1+\sin\tau)$ and
$C_{\rm term}=\tau(1+q)/(2\pi)$.
For the fixed cutoffs
\[
 (\beta_1,\beta_2,\beta_3)=(2091/10000,10907/50000,23/100)
\]
put
\[
 D_i=\int_q^{\beta_i}[1+q-x-2\sqrt{x(1+q-x)}]\,dx,
 \quad F(h)=\sum_i h_iD_i-8\sum_i h_i^3.
\]
Let $a=113/12500$, $b=593/50000$. On
$h_i\ge0$, $h_1+h_2\le a$, $h_2+h_3\le b$, define
\[
 \eta_{\rm width}=\max_h
 \frac{\max\{F(h),0\}}{\pi(16+432\sum_i h_i)}.
\]
This maximum is uniquely attained at $(a-z_*,z_*,b-z_*)$, where $z_*$
is the unique zero on $[0,a]$ of $G=N'L+432N$,
$N(z)=F(a-z,z,b-z)$ and $L(z)=16+432(a+b-z)$.

\begin{proposition}[Explicit global lower bound]\label{prop:lower}
The global constant satisfies $\CC\ge C_{\rm term}+\eta_{\rm width}$,
with exact directed enclosure
\[
 .14056946887766098063257
 < C_{\rm term}+\eta_{\rm width}
 < .14056946887766098063392.
\]
In particular $\CC>1/8$, so both the v1 coefficient conjecture and its
stronger $n^2/8-\Rstar(n)=O(\sqrt n)$ claim are false.
\end{proposition}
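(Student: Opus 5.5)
We will prove $\pi\CC=E\ge\pi(C_{\rm term}+\eta_{\rm width})$ directly in the line model of Theorem~\ref{thm:existence}. It suffices to bound $b_n/n$ from below for all large $n$. The most natural continuous form is to rescale marks and coordinates by $n$: a placement of the marks $1,\dots,n$ with separations $|x_i-x_j|\ge\sqrt{a_ia_j}$ then has span $n\,b_n$.

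The first step is the term $C_{\rm term}$, which we treat as a ``terminal'' lower bound. The constants $\tau=\cos\tau$ and $q=(1-\sin\tau)/(1+\sin\tau)$ suggest a two-scale argument. Marks below the threshold $q$ (relative to the maximum) are treated as nearly free. Marks in $[q,1]$ cannot be interleaved cheaply, because between two consecutive marks $\ge q$ the separation is at least $q$ times the mark scale. The plan is to bound the left-to-right order along the line by a potential argument. Assign to each point $i$ the weight $\sqrt{a_i}$, and note that consecutive points in the order satisfy $x_{j}-x_i\ge\sqrt{a_ia_j}$. Summing over adjacent pairs gives a sum of geometric means. By Cauchy--Schwarz / rearrangement (pairing large with small as much as possible), this is at least an explicit integral. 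That integral is then optimized over the fraction of ``small'' marks usable as separators.

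The transcendental equation $\cos\tau=\tau$ should appear as the stationarity condition of that optimization. Heuristically, we will parametrize the pairing by an angle, with $\sqrt{x}=\sin$ and $\sqrt{1+q-x}=\cos$, which matches the integrand $1+q-x-2\sqrt{x(1+q-x)}$ appearing in $D_i$. This would give $E\ge\tau(1+q)/2$, hence $\CC\ge C_{\rm term}$. I would first verify this termwise inequality pointwise on the limiting measure of adjacent pairs, using the all-pairs constraint only for adjacent and distance-two pairs.

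The second step supplies the correction $\eta_{\rm width}$. The terminal bound is attained only if the adjacency structure is exactly the extremal pairing. We will show that near the cutoffs $\beta_i$ the extremal pairing violates a distance-two constraint $|x_{i+2}-x_i|\ge\sqrt{a_ia_{i+2}}$. So a placement within $h_i$ of the extremal structure in three windows either loses $\sum h_iD_i$ against the pairing bound, or pays for the width deviation at cubic cost $8\sum h_i^3$. The coupling constraints $h_1+h_2\le a$ and $h_2+h_3\le b$ come from the window overlaps. Normalization by the total mass $16+432\sum h_i$ comes from a Lagrangian/averaging over window positions. The result is a gain of at least $\max\{F(h),0\}/(16+432\sum h_i)$ for every admissible $h$, and we then maximize over $h$. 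The statement that the maximum is attained at $(a-z_*,z_*,b-z_*)$ is a finite calculus check: $F$ is increasing in $h_1,h_3$ on the region, so both coupling constraints are tight. The remaining one-variable quotient $N/L$ has stationarity exactly $N'L+432N=0$, and the root is unique by a sign and concavity check of $G$ on $[0,a]$.

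The final step is the certified numerics. We will compute $\tau$, $q$, the polynomial-plus-arcsine closed forms of $D_i$, and $z_*$ by rational interval bisection with directed rounding, giving the stated 23-digit enclosure. Since the lower endpoint exceeds $1/8$, $\CC>1/8$. Hence $\Rstar(n)-n^2/8\ge(\CC-1/8)n^2+o(n^2)$, which refutes both the v1 coefficient and its $O(\sqrt n)$ refinement.

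The main obstacle is the second step: proving that the width-penalty inequality holds for an \emph{arbitrary} order, not just for perturbations of the extremal one. I would first try a local exchange/compactness argument on the limiting adjacency measure, which reduces to finitely many window configurations, and then fall back on the LP of Theorem~\ref{thm:lp} as a consistency check.
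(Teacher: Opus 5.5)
You correctly identify the reflected pairing $x\leftrightarrow 1+q-x$, the angle substitution, and $\cos\tau=\tau$ as a stationarity condition. But the mechanism in your first step, as written, would not give $C_{\rm term}$. The small marks must be \emph{deleted}, not made ``nearly free''. Since $b$ is monotone under deletion, $b_n\ge b(\{a_i\ge c\})$. In the induced order of the retained marks, every consecutive pair obeys its full constraint, even when the two marks are far apart in the original order; your remark about consecutive marks $\ge q$ is exactly this. Summing these constraints and bounding the degree-two edge multiset by the anti-monotone pairing (Birkhoff plus rearrangement, since $\sqrt{uv}$ is anti-Monge) gives $b_n\ge n\int_c^1\sqrt{x(1+c-x)}\,dx-o(n)$. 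With $\sin\tau=(1-c)/(1+c)$, the $c$-derivative vanishes iff $\cos\tau=\tau$, and the maximum over the cutoff is $\tau(1+q)/2$.

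Your stated plan of using only adjacent and distance-two pairs of the original order cannot work. That relaxation forgets $\sqrt{HH'}$ whenever two large marks are separated by two small ones. Take the alternating diagonal construction, use its lows $t\le1/10$ in pairs, and move the freed highs into a separate adjacent run. This already lowers the relaxed normalized span from about $0.447$ to about $0.440<\tau(1+q)/2\approx0.4416$. With deletion in place, your first step does prove $\CC\ge C_{\rm term}>1/8$.

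The decisive gap is your second step, which is not an argument. $F$, the constraints $h_1+h_2\le a$ and $h_2+h_3\le b$, and the denominator $16+432\sum_ih_i$ are read off the statement. ``Distance-two violations near the cutoffs'' and ``averaging over window positions'' give no inequality valid for every order, as you acknowledge.

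The missing idea is to couple, for one and the same order, the three nested terminal restrictions at $\beta_1<\beta_2<\beta_3$ through a single shared crossing energy. An explicit assignment dual bounds the reflected-grid energy by $8$ times the excess chain cost. The absolute signed deletion discrepancy at each cutoff is at most $54$ times that energy plus twice its crossing term. Separated strips charge the common energy only once, and this is what produces the ratio $\max\{F(h),0\}/(16+432\sum_ih_i)$.

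You also need to pass to the limit along $th$ with $t\uparrow1$, because the optimal widths fail finite floor separation infinitely often. A local exchange or compactness argument does not supply any of this. The word programs of Theorem~\ref{thm:lp} cannot even serve as a check: their lower bound loses $1/(k\pi)$, while $\eta_{\rm width}\approx4\times10^{-7}$.

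Finally, your shortcut for the width optimum is false as stated. We have $\partial F/\partial h_1=D_1-24h_1^2$, and $24a^2\approx1.96\times10^{-3}>D_1\approx1.37\times10^{-3}$; likewise $24b^2>D_3$. So $F$ is not increasing in $h_1,h_3$ on the region. The paper instead works at $u=(a-z_*,z_*,b-z_*)$, with $\mathcal L(h)=16+432\sum_ih_i$ and $\rho=F(u)/\mathcal L(u)$. There the gradient of $F-\rho\mathcal L$ is a positive combination of the two constraint normals. The exact remainder $-8\sum_i(h_i-u_i)^2(h_i+2u_i)\le0$ then gives $F\le\rho\mathcal L$ on the whole region, with equality only at $u$.
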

\begin{proof}[Proof outline]
The complete proof and exact arithmetic certificates are in \cite{lower}.
Its essential coupled information is one shared crossing energy for the
same order and all three nested terminal restrictions. An explicit assignment
dual bounds the reflected-grid energy by eight times the excess chain cost.
The absolute signed deletion discrepancy at cutoff $i$ is at most $54$
times this energy plus twice its crossing term. Separated strips charge
the common energy once, yielding
the scalar gain $\max\{F(h),0\}/(16+432\sum h_i)$.
The exact angular squeeze transfers each induced chain bound to any original
full feasible configuration. Uniform finite errors vanish at every fixed
strictly separated width vector.

For the width optimum, $G'<0$ and rational endpoint bounds isolate its zero.
Set $u=(a-z_*,z_*,b-z_*)$, $\mathcal L(h)=16+432\sum_i h_i$ and
$\rho=F(u)/\mathcal L(u)$. The gradient of $F-\rho\mathcal L$ at $u$ is a positive
combination of the two constraint normals. Its exact remainder is
$-8\sum_i(h_i-u_i)^2(h_i+2u_i)$, proving global uniqueness on the whole
closed region. To pass to the geometric limit, first use $tu$ at fixed
$0<t<1$, take $n\to\infty$, then $t\uparrow1$. The boundary
widths fail finite floor separation infinitely often; the proof never uses
them as an eventual finite separation condition. The uniform finite error bounds for this
limiting argument are supplied in \cite{lower}. The result does not assert
that this particular lower mechanism reaches $\CC$.
\end{proof}

\section{Reproducibility and open questions}

The supplement \cite{supplement} is fixed at commit
\texttt{3beb8d70c5b3748d370a92855847bdf574e5a14f}.
All supplementary references below use that snapshot. It contains detailed
proofs of the explicit endpoint estimates and block-transfer bounds, together
with standalone checkers for line placements, label recovery, reflected
cells and rational primal/dual word certificates. Directed arithmetic checks
include deliberately omitted nonadjacent constraints, missing closing gaps,
invalid label assignments and corrupt LP certificates. These bounded checks
corroborate finite computations; the analytic arguments establish the infinite
quantifiers in the theorems.

The leading normalized behavior is characterized effectively, while neither
explicit endpoint is known to equal $\CC$. Practical evaluation or
simplification of the constant, sharp explicit constructions, microscopic
optimizer structure and subleading terms remain open. The number of words
in the finite programs is exponential in their length; computability does
not give an efficient high-precision algorithm. The limit theorem neither
certifies a finite optimum beyond $n=14$ nor establishes a universal
floating-circle cascade or a contact structure for global optimizers.

\section*{Acknowledgment}
AI assistance was used for proof exploration, software, drafting and
adversarial checks. Responsibility for the mathematical statements and
presentation rests with the author.

\end{document}